\documentclass[10pt,a4paper]{article}
\usepackage{fullpage}
\usepackage{amsfonts,amsmath,amssymb}
\usepackage{graphicx}\usepackage{sectsty}
\newtheorem{theorem}{Theorem}[section]
\newtheorem{lemma}[theorem]{Lemma}
\newtheorem{definition}[theorem]{Definition}
\newtheorem{proposition}[theorem]{Proposition}
\newtheorem{corollary}[theorem]{Corollary}
\newenvironment{proof}[1][Proof]{\begin{trivlist}
\item[\hskip \labelsep {\bfseries #1}]}{\end{trivlist}}

\newtheorem{remark}[theorem]{Remark}
\numberwithin{equation}{section}

\sectionfont{\large}

\begin{document}
\title{An Uniqueness Result on Spherically Stratified Media with Interior Transmission Eigenvalues }
\author{Lung-Hui Chen$^1$}\maketitle\footnotetext[1]{Department of
Mathematics, National Chung Cheng University, 168 University Rd.
Min-Hsiung, Chia-Yi County 621, Taiwan. Email:
mr.lunghuichen@gmail.com/lhchen@math.ccu.edu.tw. Fax:
886-5-2720497.}
\begin{abstract}
Given a set of transmission eigenvalues, we describe the
connection between such a set and the indicator functions in
entire function theory. The indicator functions control the
asymptotic growth rate of the solution of the Sturm-Liouville
problem which has an uniqueness in the inverse spectral theory.
Accordingly, the set of transmission eigenvalues has an inverse
spectral property.
\\MSC:35P25/35R30/34B24/.
\\Keywords: inverse spectral
theory/transmission eigenvalues/Cartwright's theory.
\end{abstract}
\section{Introduction and the Main Result}
In this paper, we consider the stationary scattering problem
\begin{eqnarray}\label{1.1}
\left\{%
\begin{array}{ll}
\Delta u+k^2n(x)u=0, & \mbox{ in }\mathbb{R}^3; \\
    u=u^s+u^i;\\
    \lim_{r\rightarrow\infty}r\{\frac{\partial u^s}{\partial r}-iku^s\}=0, \\
\end{array}%
\right.
\end{eqnarray}
where $u^i$ is an entire solution of the Helmholtz equation, $k$
is the wave number and
$n\in\mathcal{C}^1(0,a)\cap\mathcal{H}^2(0,a)$ is specified
spherically stratified in $B$, where
$B:=\{\mathbb{R}^3|\,|x|<a\}$, such that $n(x)=n(r)>0$ and
$n(a)=1$. We may ask that is there any incident fields $u^i$ such
that the scattered field $u^s$ is identically zero? The answer is
positive provided that there exists a nontrivial solution to the
following interior transmission problem: $k\in\mathbb{C}$,
$w,v\in\mathcal{L}^2(B)$, $w-v\in\mathcal{H}^2_0(B)$,
\begin{eqnarray}\label{12}
\left\{%
\begin{array}{ll}
    \Delta w+k^2n(r)w=0,  & \hbox{ in }B; \\
    \Delta v+k^2v=0 & \hbox{ in }B; \\
    w=v, & \hbox{ on }\partial B; \\
    \frac{\partial w}{\partial r}=\frac{\partial v}{\partial r}& \hbox{ on }\partial B.\\
\end{array}%
\right.
\end{eqnarray}
For spherical perturbations as the one given by~(\ref{1.1}), we
consider the spherical harmonics which we refer to Colton and
Kress \cite{Colton} for a theory in inverse problems. In
particular, we let $Y_l$ be the solution of following equation.
\begin{equation}
Y_l''+\frac{2}{r}Y_l'+\{k^2n(r)-\frac{l(l+1)}{r^2}\}Y_l=0,
\end{equation}
such that
\begin{equation}
\lim_{r\rightarrow0}\{Y_l(r)-j_l(kr)\}=0,
\end{equation}
where $j_l$ is a spherical Bessel function. The existence of the
nontrivial solution of the interior transmission
problem~(\ref{12}) is implied by the existence of the nontrivial
constants $a_l$, $b_l$ for some $k$, $l$ to the following system
of equations.
\begin{eqnarray}
\left\{%
\begin{array}{ll}
    a_lY_l(a)-b_lj_l(ka)=0;\\
    a_lY_l'(a)-b_lkj_l'(ka)=0.\\
\end{array}%
\right.
\end{eqnarray}
Equivalently, we are looking for the zeros of the following
functional determinant.
\begin{equation}
d_l(k):=\det\left(%
\begin{array}{cc}
  Y_l(a) & -j_l(ka) \\
  Y_l'(a) & -kj_l'(ka) \\
\end{array}%
\right).
\end{equation}
These zeros of the functional determinant $d_l(k)$ are the
interior transmission eigenvalues of~(\ref{12}). In particular,
when $l=0$, we consider the zeros of
\begin{equation}\label{1.7}
d_0(k):=\det\left(%
\begin{array}{cc}
  Y_0(a) & -j_0(ka) \\
  Y_0'(a) & -kj_0'(ka) \\
\end{array}%
\right),
\end{equation}
where $Y_0=\frac{y(r)}{r}$ and $y(r)$ satisfies
\begin{equation}\label{1.8}
y''+k^2n(r)y=0,\,y(0)=0,\,y'(0)=1,
\end{equation}
which is well-known that there exists an unique solution
to~(\ref{1.8}) to every $k\in\mathbb{C}$. Hence, we may see
$y=y(x;k)$. $j_0(kr)=\frac{\sin{kr}}{kr}$. Moreover, we have the
asymptotics of $y(r;k)$ and $y'(r;k)$:
\begin{eqnarray}\label{1.9}
y(x;k)=\frac{1}{[n(0)n(r)]^{\frac{1}{4}}k}\sin(k\int_0^r[n(r)]^{\frac{1}{2}}dr)[1+O(\frac{1}{|k|})],\,\forall
k\in \mathbb{C}\setminus\{0i+\mathbb{R}\}.
\end{eqnarray}
Similarly,
\begin{eqnarray}\label{1.10}
y'(x;k)=[n(r)/n(0)]^{\frac{1}{4}}\cos({k}\int_0^r[n(r)]^{\frac{1}{2}}dr)[1+O(\frac{1}{|k|})],\,\forall
k\in \mathbb{C}\setminus\{0i+\mathbb{R}\}.
\end{eqnarray}
We refer such asymptotics to \cite[Proposition 2.3]{Aktosun}. Such
asymptotic expansions are classic in spectral theory. See
P\"{o}schel and Trubowitz \cite{Poschel} and Naimark
\cite{Naimark}.
\begin{definition}We define
$s:=a-\int_0^a[n(r)]^{\frac{1}{2}}dr$ and
$b:=\int_0^a[n(r)]^{\frac{1}{2}}dr$.
\end{definition}
In this paper, we assume either
\begin{equation}\label{H1}
a>b\mbox{ or }a<b,
\end{equation}
simultaneously for each $n(r)$'s. The following asymptotic
behavior holds.
\begin{equation}\label{1.12}
d_0(k)=\frac{1}{a^2k[n(0)]^{1/4}}\sin{k(a-b)}+O(\frac{1}{k^2}),\forall
k\in 0i+\mathbb{R}.
\end{equation}
Such asymptotic expansion is classical in spectral theory. See
Colton and Kress \cite{Colton}. Let us consider its behavior in
$\mathbb{C}$. From~(\ref{1.9}) and~(\ref{1.10}),
\begin{eqnarray}\label{1.13}\nonumber
\frac{y(a)}{a}[-kj_0'(ka)]&=&[\frac{B\sin ka\sin
kb}{a^3k^2}-\frac{B\cos ka\sin kb}{a^2k}][1+O(\frac{1}{|k|})],\,
B:=\frac{1}{[n(0)n(a)]^{\frac{1}{4}}};\\
j_0(ka)(\frac{y(r)}{r})'|_{r=a}&=&[\frac{C\sin{ka}\cos{kb}}{a^2k}-\frac{B\sin{ka}\sin{kb}}{a^3k^2}][1+O(\frac{1}{|k|})],\,
C:=[\frac{n(a)}{n(0)}]^{\frac{1}{4}}.
\end{eqnarray}
Hence, we have the asymptotics for $d_0(k)$.
\begin{eqnarray}
d_0(k)&=&j_0(ka)(\frac{y(r)}{r})'|_{r=a}+\frac{y(a)}{a}[-kj_0'(ka)]\\\label{1.15}
&=&\frac{1}{a^2k[n(0)]^{\frac{1}{4}}}[\sin
k(a-b)][1+O(\frac{1}{|k|})],\,\forall k\in
\mathbb{C}\setminus\{0i+\mathbb{R}\}.
\end{eqnarray}
\par
In Aktosun, Gintides and Papanicolaou \cite[p.5]{Aktosun}, they
have shown if $\int_0^a[n(r)]^{\frac{1}{2}}dr<a$, then the
transmission eigenvalues corresponding to spherically symmetric
solutions of the interior transmission problem uniquely determine
the $n(r)$. Furthermore, in \cite{Aktosun}, it is shown if
$d_0(k)\equiv0$ for $\lambda\in\mathbb{C}$, then $n(r)\equiv1$ in
$[0,a]$. It is also discussed that the signs of the quantity $a-b$
plays a role in the inverse spectral theory in \cite{Aktosun}.
Furthermore, in Cakoni, Colton and Gintides \cite[Theorem
2.1]{Cakoni}, they have shown if $n(0)$ is given and $n(r)>1$,
then $n(r)$ is uniquely determined from some knowledge of the
transmission eigenvalues. It is expected among mathematicians,
say, \cite{Colton3}, that such an uniqueness holds for the
condition $n(r)>1$. In this paper, we show that only the interior
transmission eigenvalues near the real axis are needed to
determine the $n(r)$. In addition, we propose another qualitative
description on the counting function to the zeros of $d_l(z)$. In
the spectral theory of Sturm-Liouville, such a qualitative
description on the growth rate of zeros of $d_l$ is connected to
the inverse problem on finding index $n(r)$. For such a
connection, we refer to McLaughlin and Polyakov \cite{Mc} which is
based on P\"{o}schel and Trubowitz's work in \cite{Poschel}. In
\cite{Mc}, there is an argument on the qualitative description for
the zeros of $d_0(z)$.
\par
Firstly, we need some vocabulary from entire function theory. We
refer to the Levin's book \cite{Levin,Levin2}.
\begin{definition}
Let $f(z)$ be an entire function. Let
$M_f(r):=\max_{|z|=r}|f(z)|$. An entire function of $f(z)$ is said
to be a function of finite order if there exists a positive
constant $k$ such that the inequality
\begin{equation}
M_f(r)<e^{r^k}
\end{equation}
is valid for all sufficiently large values of $r$. The greatest
lower bound of such numbers $k$ is called the order of the entire
function $f(z)$. By the type $\sigma$ of an entire function $f(z)$
of order $\rho$, we mean the greatest lower bound of positive
number $A$ for which asymptotically we have
\begin{equation}
M_f(r)<e^{Ar^\rho}.
\end{equation}
That is
\begin{equation}
\sigma:=\limsup_{r\rightarrow\infty}\frac{\ln
M_f(r)}{r^\rho}.
\end{equation}  If $0<\sigma<\infty$, then we say
$f(z)$ is of normal type or mean type.
\end{definition}
\begin{definition}
If an entire function $f(z)$ is of order one and of normal type,
then we say it is an entire function of exponential type $\sigma$.
\end{definition}
\begin{definition}
Let $\rho\in\mathbb{R}$ and
$\rho(r):\mathbb{R}^+\rightarrow\mathbb{R}^+$. We say $\rho(r)$ is
a proximate order to $\rho$ if
\begin{equation}
\lim_{r\rightarrow\infty}\rho(r)=\rho\geq0;\,\lim_{r\rightarrow\infty}r\rho'(r)\ln
r=0.
\end{equation}
\end{definition}
\begin{definition}\label{1.5}
Let $f(z)$ be an integral function of finite order in the angle
$[\theta_1,\theta_2]$, we call the following quantity as the
generalized indicator of the function $f(z)$.
\begin{equation}
h_f(\theta):=\limsup_{r\rightarrow\infty}\frac{|f(re^{i\theta})|}{r^{\rho(r)}},\,\theta_1\leq\theta\leq\theta_2,
\end{equation}
where $\rho(r)$ is some proximate order.
\end{definition}
We review two inequalities for indicator functions.
\begin{eqnarray}\label{1.21}
&&h_{fg}(\theta)\leq h_{f}(\theta)+h_g(\theta);\\
&&h_{f+g}\leq\max\{h_f(\theta),h_g(\theta)\}. \label{1.22}
\end{eqnarray}
We can find these in \cite[p.51]{Levin}.

The order and the type of an integral function in an angle can be
defined similarly. The connection between the indicator
$h_f(\theta)$ and its type $\sigma$ can be specified by the
following theorem.
\begin{lemma}[Levin \cite{Levin},\,p.72]\label{1.7}
The maximum value of the indicator $h_f(\theta)$ of the function
$f(z)$ on the interval $\alpha\leq\theta\leq\beta$ is equal to the
type $\sigma_f$ of this function inside the angle $\alpha\leq\arg
z\leq\beta$.
\end{lemma}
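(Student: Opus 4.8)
The plan is to prove separately the two inequalities
\[
\max_{\alpha\le\theta\le\beta}h_f(\theta)\le\sigma_f
\qquad\text{and}\qquad
\sigma_f\le\max_{\alpha\le\theta\le\beta}h_f(\theta).
\]
The first is immediate from the definitions. Writing $M_f(r)$ for the maximum of $|f|$ on the arc $\{\,|z|=r,\ \alpha\le\arg z\le\beta\,\}$, one has $|f(re^{i\theta})|\le M_f(r)$ for every $\theta\in[\alpha,\beta]$ and every $r>0$; dividing by $r^{\rho(r)}$, taking logarithms and then $\limsup_{r\to\infty}$ gives $h_f(\theta)\le\sigma_f$ for each fixed $\theta$, hence $\max_\theta h_f(\theta)\le\sigma_f$. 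This half uses nothing beyond the definitions of the type and the indicator, and not even the finiteness of the order.

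For the reverse inequality the strategy is to reduce it to a \emph{uniform} version of the indicator estimate. I would invoke two standard facts from the theory of the indicator function, both established in Levin \cite{Levin} by Phragm\'{e}n--Lindel\"{o}f arguments: (i) for a function of finite order in the angle, $h_f(\theta)$ is trigonometrically $\rho$-convex and therefore continuous on $[\alpha,\beta]$, so that the maximum $\max_\theta h_f(\theta)$ is actually attained; and (ii) for every $\varepsilon>0$ there is $R_\varepsilon$ with
\[
|f(re^{i\theta})|<\exp\!\bigl((h_f(\theta)+\varepsilon)\,r^{\rho(r)}\bigr)
\qquad\text{for all }\theta\in[\alpha,\beta]\text{ and all }r\ge R_\varepsilon .
\]
Granting (ii), one gets $M_f(r)\le\exp\!\bigl((\max_\theta h_f(\theta)+\varepsilon)\,r^{\rho(r)}\bigr)$ for $r\ge R_\varepsilon$, whence $\sigma_f\le\max_\theta h_f(\theta)+\varepsilon$; letting $\varepsilon\downarrow0$ and combining with the first inequality yields the asserted equality. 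To obtain (ii) itself, I would use the continuity in (i) to cover $[\alpha,\beta]$ by finitely many closed subarcs $[\theta_j',\theta_j'']$ whose openings are smaller than $\pi/\rho$ and on which $h_f$ oscillates by less than $\varepsilon/2$; on the two bounding rays of each subarc the pointwise $\limsup$ defining $h_f$ supplies a bound of the form $|f|<\exp\!\bigl((h_f+\varepsilon/2)r^{\rho(r)}\bigr)$ past a threshold, $|f|$ is bounded on the remaining compact segment, and a Phragm\'{e}n--Lindel\"{o}f comparison in the sector $\theta_j'\le\arg z\le\theta_j''$ propagates those boundary bounds to the interior. Taking the worst of the finitely many subarcs then gives (ii).

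The step I expect to be the main obstacle is precisely this Phragm\'{e}n--Lindel\"{o}f interpolation between rays: one controls $|f|$ only on the two edges of a thin sector and must deduce a comparable bound throughout its interior, which forces the openings to be chosen smaller than $\pi/\rho$ and requires absorbing into the error $\varepsilon$ the mild distortion arising from working with a proximate order $\rho(r)$ rather than a fixed exponent $\rho$. Equivalently, the reverse inequality can be packaged as an extraction argument: pick $r_n\to\infty$ with $\ln M_f(r_n)/r_n^{\rho(r_n)}\to\sigma_f$, let the maximum on the circle $|z|=r_n$ be attained at $\theta_n$, pass to a subsequence with $\theta_n\to\theta_0\in[\alpha,\beta]$, and use the continuity of $h_f$ together with the uniform estimate (ii) to deduce $\sigma_f\le h_f(\theta_0)\le\max_\theta h_f(\theta)$. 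Once the continuity of the indicator and the uniform majorization are available, the equality $\max_{\alpha\le\theta\le\beta}h_f(\theta)=\sigma_f$ drops out by the sandwich above.
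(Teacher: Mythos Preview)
The paper does not prove this lemma: it is stated as a citation to Levin \cite{Levin}, p.\,72, and no argument is supplied. Your proposal therefore cannot be compared to a proof in the paper, but it is correct and is essentially the argument one finds in Levin's text. The easy inequality $\max_\theta h_f(\theta)\le\sigma_f$ is indeed immediate (a trivial remark: the phrase ``dividing by $r^{\rho(r)}$, taking logarithms'' should read ``taking logarithms, then dividing by $r^{\rho(r)}$''). For the reverse inequality you correctly identify the two nontrivial inputs---the $\rho$-trigonometric convexity, hence continuity, of $h_f$, and the uniform-in-$\theta$ majorization $|f(re^{i\theta})|<\exp\bigl((h_f(\theta)+\varepsilon)r^{\rho(r)}\bigr)$ obtained by a Phragm\'en--Lindel\"of argument on thin sectors of opening less than $\pi/\rho$. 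Both facts are proved in Levin precisely along the lines you sketch, and once they are in hand the sandwich argument is routine. Your alternative ``extraction'' formulation via a convergent subsequence $\theta_n\to\theta_0$ is also standard and equivalent.
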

\begin{lemma}
Let $f(z)=C\sin{Az}$, where $A$, $C$ are constants. $f(z)$ is an
entire function of exponential type $|A|$.
\end{lemma}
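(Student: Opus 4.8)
The plan is to read off from the definitions that $f(z)=C\sin Az$ is entire and has order exactly one and type exactly $|A|$; by the definition of exponential type recalled above, this is precisely the assertion. Throughout one may assume $C\neq 0$ and $A\neq 0$, since otherwise $f$ is constant and the statement (which concerns normal type, $0<\sigma<\infty$) is vacuous. First I would observe that $f$ is entire, being the product of the constant $C$ with the composition of the entire function $\sin$ and the entire linear map $z\mapsto Az$.

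For the upper bound on the type, write $\sin Az=\dfrac{e^{iAz}-e^{-iAz}}{2i}$ and use $|e^{\pm iAz}|=e^{\mp\operatorname{Im}(Az)}\le e^{|A|\,|z|}$ together with the triangle inequality to get
\[
|f(z)|\le |C|\,\frac{|e^{iAz}|+|e^{-iAz}|}{2}\le |C|\,e^{|A|\,|z|},\qquad z\in\mathbb{C}.
\]
Hence $M_f(r)\le|C|e^{|A|r}$, which forces the order of $f$ to be at most one and, comparing with $e^{Ar^{\rho}}$ at $\rho=1$, its type to be at most $|A|$.

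For the matching lower bound I would restrict $f$ to the ray $z=-i\overline{A}\,|A|^{-1}s$, $s\ge 0$, which is chosen so that $iAz=|A|s=|A|\,|z|$ is real and nonnegative; along this ray $|f(z)|=|C|\,\bigl|e^{|A||z|}-e^{-|A||z|}\bigr|/2=|C|\sinh(|A|\,|z|)$, so $M_f(r)\ge|C|\sinh(|A|r)$. Therefore $\limsup_{r\to\infty}r^{-1}\ln M_f(r)\ge|A|$, and since this $\limsup$ is now seen to be a finite positive number at $\rho=1$, the order of $f$ is exactly one and its type is exactly $\sigma_f=|A|$. By the definition of exponential type recalled above, $f$ is an entire function of exponential type $|A|$.

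There is essentially no obstacle; the only point needing care is the choice of ray in the lower estimate once $A$ is permitted to be complex — one must rotate the argument so that $iAz$ is real and nonnegative, rather than simply testing on the imaginary axis. Alternatively, one can argue through indicator functions: by (\ref{1.22}) and (\ref{1.21}) one has $h_f(\theta)\le\max\{h_{e^{iAz}}(\theta),h_{e^{-iAz}}(\theta)\}$, where the two right-hand indicators are computed explicitly from $h_{e^{cz}}(\theta)=\operatorname{Re}(ce^{i\theta})$, so that $\max_\theta h_f(\theta)\le|A|$; Lemma~\ref{1.7} then identifies $\max_\theta h_f(\theta)$ with the type $\sigma_f$, while a direct evaluation of $h_f$ along the ray above yields the reverse inequality $\sigma_f\ge|A|$, giving again $\sigma_f=|A|$.
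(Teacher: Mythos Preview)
Your argument is correct. The paper's proof takes a somewhat different route: rather than estimating $M_f(r)$ from above and below, it writes $\sin z/z=(e^{iz}-e^{-iz})/(2iz)$ and computes the indicator function explicitly as
\[
h_{\sin z/z}(\theta)=\limsup_{r\to\infty}\frac{\ln\bigl|\tfrac{1}{2r}\sqrt{e^{2r\sin\theta}+e^{-2r\sin\theta}-2\cos(2r\cos\theta)}\bigr|}{r}=|\sin\theta|,
\]
and then (implicitly, via Lemma~\ref{1.7}) reads off the type as $\max_\theta|\sin\theta|=1$, scaling by $A$. Your direct approach via the bounds $M_f(r)\le|C|e^{|A|r}$ and $M_f(r)\ge|C|\sinh(|A|r)$ along a suitable ray is more self-contained---it avoids the indicator machinery and Lemma~\ref{1.7} altogether and handles complex $A$ without modification. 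The paper's route, on the other hand, produces the full indicator $|\sin\theta|$, which is exactly the ingredient reused in the very next lemma when $h_{d_0}(\theta)$ is computed; so in context the indicator calculation is the more economical choice. Your alternative argument at the end, via~(\ref{1.21}),~(\ref{1.22}) and Lemma~\ref{1.7}, is close in spirit to what the paper does, though the paper evaluates $h$ directly from the explicit formula rather than bounding it through those inequalities.
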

\begin{proof}
It suffices to see $\frac{\sin{z}}{z}=\frac{e^{iz}-e^{-iz}}{2iz}$
and
\begin{equation}
h_{\frac{\sin
z}{z}}(\theta)=\limsup_{r\rightarrow\infty}\frac{\ln|\frac{\sqrt{e^{2r\sin\theta}
+e^{-2rsin\theta}-2\cos{2(r\cos\theta)}}}{2r}|}{r}=|sin\theta|.
\end{equation}$\Box$
\end{proof}
For our indicator for $d_0(z)$.
\begin{lemma}\label{1.8}
Let $b=\int_0^a[n(r)]^{\frac{1}{2}}dr$. Then, given $d_0(z)$ as
in~(\ref{1.12}), it is an entire function of exponential type
$|a-b|$ with indicator function
\begin{equation}
h_{d_0}(\theta)=|\sin{\theta}||a-b|.\end{equation}
\end{lemma}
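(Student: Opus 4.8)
The claim has two parts: that $d_0(z)$ is entire of exponential type $|a-b|$, and that its indicator is $h_{d_0}(\theta)=|\sin\theta|\,|a-b|$. The natural strategy is to split $d_0(z)$ via the asymptotic identity~(\ref{1.15}) into a dominant term $\frac{1}{a^2 z [n(0)]^{1/4}}\sin z(a-b)$ and a remainder of lower growth, then apply the two indicator inequalities~(\ref{1.21})--(\ref{1.22}) together with the elementary computation in the preceding lemma for $\sin$. First I would note that $d_0(z)$ is entire in $z$: $Y_0(a)=y(a;k)/a$ and $Y_0'(a)$ depend analytically on $k$ because the solution $y(r;k)$ of~(\ref{1.8}) is, for fixed $r$, an entire function of $k$ (standard analytic dependence on the parameter for linear ODEs), and $j_0(ka)$, $kj_0'(ka)$ are entire in $k$; hence the $2\times 2$ determinant $d_0(k)$ is entire.

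Next I would establish the upper bound $h_{d_0}(\theta)\le |\sin\theta|\,|a-b|$. Using the expansion of $y$ and $y'$ in~(\ref{1.9})--(\ref{1.10}) and of $j_0$, one sees each of the four products entering $d_0$ is, up to polynomially bounded factors, a product of terms $e^{\pm i k a}$, $e^{\pm i k b}$, so the exponential growth of $d_0(k)$ is governed by $e^{|{\rm Im}\,k|\,|a\pm b|}$; the relevant combination that survives in~(\ref{1.15}) is $\sin k(a-b)$, giving growth $e^{|{\rm Im}\,k|\,|a-b|}$. More carefully: write $d_0(z)=g(z)+R(z)$ with $g(z)=\frac{1}{a^2 z[n(0)]^{1/4}}\sin z(a-b)$ and, from~(\ref{1.15}), $R(z)=O(e^{|{\rm Im}\,z|\,|a-b|}/|z|^2)$ away from the real axis. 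By the earlier lemma, $h_g(\theta)=|\sin\theta|\,|a-b|$ (the $1/z$ factor does not affect the indicator, being of order zero), and $h_R(\theta)\le |\sin\theta|\,|a-b|$ as well; then~(\ref{1.22}) gives $h_{d_0}(\theta)\le|\sin\theta|\,|a-b|$. By Lemma~\ref{1.7} (Levin, p.72) the type of $d_0$ inside any angle equals $\max_\theta h_{d_0}(\theta)$, so the type is at most $|a-b|$, and $d_0$ is of order one and finite type, i.e. of exponential type $|a-b|$.

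For the matching lower bound $h_{d_0}(\theta)\ge|\sin\theta|\,|a-b|$, I would argue that the dominant term cannot be cancelled by the remainder. Fix $\theta$ with $\sin\theta\neq 0$ and look along the ray $z=re^{i\theta}$; then $|g(re^{i\theta})|$ is, for a sequence $r_m\to\infty$, comparable to $\frac{1}{r_m}e^{r_m|\sin\theta|\,|a-b|}$ (choosing $r_m$ so that $\sin z(a-b)$ is not near a zero — e.g. along a subsequence where ${\rm Re}(z(a-b))$ avoids multiples of $\pi$), while $|R(re^{i\theta})|=O(r^{-2}e^{r|\sin\theta|\,|a-b|})$ is strictly smaller by a factor $1/r$. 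Hence $|d_0(re^{i\theta})|\gtrsim \frac{1}{r}e^{r|\sin\theta|\,|a-b|}$ on this subsequence, which forces $h_{d_0}(\theta)\ge|\sin\theta|\,|a-b|$. Combining the two bounds yields $h_{d_0}(\theta)=|\sin\theta|\,|a-b|$ for all $\theta$, and the hypothesis~(\ref{H1}) that $a\neq b$ guarantees the type $|a-b|$ is positive, so $d_0$ is genuinely of exponential type.

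The main obstacle is making the remainder control uniform enough in $\theta$: the asymptotics~(\ref{1.9})--(\ref{1.10}) are stated with error $O(1/|k|)$ away from the real axis, but near $\theta=0,\pi$ the relative error can blow up, so the indicator identity on the real axis itself needs the separate real-axis asymptotic~(\ref{1.12}) and a continuity/regularization argument for the indicator (which is continuous in $\theta$ under a proximate order, by the general theory in Levin). I would handle $\theta=0,\pi$ by continuity of $h_{d_0}$ rather than by direct estimation, since $|\sin\theta|\,|a-b|\to 0$ there and the inequality $h_{d_0}\ge 0$ is automatic.
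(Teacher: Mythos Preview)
Your proof is correct and follows essentially the same route as the paper's: both use the asymptotic~(\ref{1.15}) to identify the dominant $\sin z(a-b)$ term, compute its indicator directly as $|\sin\theta|\,|a-b|$ for $\theta\neq 0,\pi$, and then extend to $\theta=0,\pi$ by continuity of the indicator function (Levin). Your version is more explicit in separating the upper and lower bounds via the decomposition $d_0=g+R$ and the inequalities~(\ref{1.21})--(\ref{1.22}), whereas the paper passes directly from~(\ref{1.15}) to the indicator in one line, but the substance is the same.
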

\begin{proof}
Let $\eta:=za-zb$. We compute that
\begin{eqnarray}\nonumber
|\sin
\eta|^2&=&\frac{1}{4}[e^{\eta+\overline{\eta}}-e^{\eta-\overline{\eta}}-e^{-\eta+\overline{\eta}}+e^{-\eta-\overline{\eta}}]\\
&=&\frac{1}{4}[e^{-2ya+2yb}-e^{2ixa-2ixb}-e^{-2ixa+2ixb}+e^{2ya-2yb}]\nonumber\\
&=&\frac{1}{4}[e^{-2ya+2yb}+e^{2ya-2yb}-2\cos(2xa-2xb)].
\end{eqnarray}
Following from~(\ref{1.15}) that
\begin{equation}
h_{d_0}(\theta)=\limsup_{r\rightarrow\infty}\frac{\ln|\sin\eta|}{r}=\limsup_{r\rightarrow\infty}\frac{|y||a-b|}{r}
=|\sin\theta||a-b|,\,\theta\neq0,\pi.
\end{equation}
We see that $h_{d_0}(\theta)$ is continuous in $[0,2\pi]$, Levin
\cite[p.54]{Levin}, we conclude
\begin{equation}
 h_{d_0}(\theta)=|a-b||\sin\theta|,\,\theta\in[0,2\pi].
\end{equation}
Furthermore, $d_0(z)$ is an entire function of exponential growth
due to the definition~(\ref{1.7}) and the fact that spherical
Bessel function $j_0(z)=\frac{\sin z}{z}$ and $y(r)$ are entire
functions of exponential type.

We see that $h_{d_0}(\theta)$ attains its nontrivial maximum
$|a-b|$ at $\theta=\pm\frac{\pi}{2}$. $\Box$
\end{proof}

\par
The following is the main theorem of this paper.
\begin{theorem}\label{19}
Let the $0$-th functional determinant $d_0(z)$ be defined as
above. Then, the zeros of $d_0(z)$ inside the angular wedge
\begin{equation}
\Sigma_\epsilon:=\{k\in\mathbb{C}|\,-\epsilon\leq\arg{k}\leq\epsilon,\,\pi-\epsilon\leq\arg{k}\leq\pi+\epsilon\},\,\forall\epsilon>0,
\end{equation}
uniquely determines between the indices $n(r)$'s provided they are
of the same value at $r=0$.
\end{theorem}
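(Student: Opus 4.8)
The plan is to reduce the statement, via the known inverse spectral uniqueness for the whole determinant (Aktosun--Gintides--Papanicolaou, Cakoni--Colton--Gintides), to the claim that the zeros lying in the thin wedge $\Sigma_\epsilon$ already determine $d_0$ completely. So fix two admissible indices $n_1,n_2$ with $n_1(0)=n_2(0)=:n_0$ whose determinants $d_0^{(1)},d_0^{(2)}$ share the same zeros, with multiplicity, inside $\Sigma_\epsilon$, and set $b_i:=\int_0^a[n_i(r)]^{1/2}\,dr$, $\sigma_i:=|a-b_i|$. By Lemma~\ref{1.8} each $d_0^{(i)}$ is entire of exponential type $\sigma_i$ with indicator $\sigma_i|\sin\theta|$, and by~(\ref{1.12}) it is bounded on $\mathbb{R}$, so $\int_{\mathbb{R}}\frac{\ln^+|d_0^{(i)}(x)|}{1+x^2}\,dx<\infty$ and $d_0^{(i)}$ lies in the Cartwright class. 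It then suffices to prove $d_0^{(1)}\equiv d_0^{(2)}$.

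First I would localize the zeros. Cartwright's theorem (Levin \cite{Levin}) gives that the zeros of $d_0^{(i)}$ have counting function $n^{(i)}(r)=\frac{2\sigma_i}{\pi}r+O(1)$ and satisfy $\sum_m|\operatorname{Im}(1/z_m^{(i)})|<\infty$; sharpening this with the asymptotics~(\ref{1.12}) and~(\ref{1.15}) by a Rouch\'e comparison of $d_0^{(i)}$ with $\frac{\sin(a-b_i)z}{a^2 z[n_0]^{1/4}}$ shows that all zeros of $d_0^{(i)}$ lie in a fixed horizontal strip and, for $|z|$ large, cluster $o(1)$-close to the lattice $\{m\pi/(a-b_i)\}_{m\neq0}$. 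Hence for every $\epsilon>0$ only finitely many zeros of $d_0^{(i)}$ fall outside $\Sigma_\epsilon$, and the counting function of the $\Sigma_\epsilon$-zeros still equals $\frac{2\sigma_i}{\pi}r+O(1)$. Since the two sets of $\Sigma_\epsilon$-zeros coincide, $\sigma_1=\sigma_2$, that is $|a-b_1|=|a-b_2|$; the standing hypothesis~(\ref{H1}) fixes the common sign, so $b_1=b_2=:b$, and then the leading terms of~(\ref{1.15}) for $d_0^{(1)}$ and $d_0^{(2)}$ coincide.

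Next I would compare the Hadamard factorizations. Each $d_0^{(i)}$ is even, real on $\mathbb{R}$, of genus $\le1$ with $\sum|z_m^{(i)}|^{-2}<\infty$, hence
\[
d_0^{(i)}(z)=c_i\,z^{2m_i}R_i(z^2)\prod_{\zeta}\Bigl(1-\frac{z^2}{\zeta^2}\Bigr),
\]
where $\{\pm\zeta\}$ ranges over the common set of $\Sigma_\epsilon$-zeros and $R_i$ is the monic polynomial carrying the finitely many zeros of $d_0^{(i)}$ outside $\Sigma_\epsilon$. In the quotient the infinite products cancel, leaving
\[
\frac{d_0^{(1)}(z)}{d_0^{(2)}(z)}=\frac{c_1 z^{2m_1}R_1(z^2)}{c_2 z^{2m_2}R_2(z^2)}=:\frac{P(z)}{Q(z)},
\]
a rational function; comparing with the now-identical leading behaviour along the imaginary axis, where $|d_0^{(i)}(iy)|\sim e^{|a-b|y}/\bigl(2a^2 y[n_0]^{1/4}\bigr)$ as $y\to\infty$, forces $P(z)/Q(z)\to1$, so $\deg P=\deg Q$ with equal leading coefficients.

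The hard part will be to upgrade this to $P\equiv Q$: entire function theory alone recovers $d_0$ from its $\Sigma_\epsilon$-zeros only up to a rational factor of the form $1+O(1/z^2)$, and one must rule out any genuine pole or zero of $P/Q$, equivalently any discrepancy among the finitely many exceptional zeros or in the order of the zero at the origin. For this I would feed back the finer structure of $d_0$ as a functional determinant --- the next coefficients in the classical expansion of $d_0^{(i)}(k)$ (P\"oschel--Trubowitz \cite{Poschel}, Naimark \cite{Naimark}) together with the values of $d_0^{(i)}$ and its low-order derivatives at $k=0$, which are governed by $n_i$ near $r=0$ and in particular by $n_i(0)=n_0$ --- so that matching these along the real axis via~(\ref{1.12}) forces $P$ and $Q$ to be equal constants, whence $d_0^{(1)}\equiv d_0^{(2)}$. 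Once the two determinants agree they have the same zero set, and the established inverse spectral uniqueness for $d_0$ with $n(0)$ prescribed (\cite{Aktosun,Cakoni}) yields $n_1\equiv n_2$, which is the claim. I expect this last step --- genuinely closing the gap between ``the zeros in $\Sigma_\epsilon$'' and ``all of $d_0$'' --- to be the principal obstacle of the argument.
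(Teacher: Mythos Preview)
Your overall architecture---localize the zeros near the real axis, read off $|a-b_1|=|a-b_2|$ from the density/type, use~(\ref{H1}) to get $b_1=b_2$, then argue $d_0^{(1)}\equiv d_0^{(2)}$ and finally invoke the Sturm--Liouville inverse uniqueness---matches the paper. The divergence, and the real gap, is exactly where you flag it: the passage from ``same zeros in $\Sigma_\epsilon$'' to $d_0^{(1)}\equiv d_0^{(2)}$.

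You form the quotient and reduce to a rational identity $P\equiv Q$, then propose to force this by matching the next asymptotic coefficients of $d_0^{(i)}$ and the Taylor data at $k=0$. This is not convincing: those higher coefficients and the values $d_0^{(i)}(0)$, $(d_0^{(i)})''(0)$, \dots depend on integral functionals of $n_i$ beyond $n_i(0)$ and $b_i$, so there is no a priori reason they coincide for two indices sharing only $n(0)$ and $b$. At best you would extract a finite list of constraints on the finitely many extra zeros, not an identification. So the step ``$P\equiv Q$'' is genuinely open in your outline.

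The paper avoids the quotient entirely and works with the \emph{difference} $d^1-d^2$. After Proposition~\ref{31} gives $b^1=b^2$ (and $n^1(0)=n^2(0)$ is assumed), the leading term in~(\ref{1.15}) is literally the same for both, so $d^1-d^2$ has indicator $0$ at $\theta=\pm\pi/2$ (Corollary~\ref{33}); combined with the factorization~(\ref{3.26}) and the fact that the zeros outside $\Sigma_\epsilon$ have zero angular density (so the residual factor $Q_\epsilon$ has vanishing indicator), one concludes $d^1-d^2$ is of minimal exponential type. Since by~(\ref{1.12}) it also tends to $0$ along the real axis, Phragm\'en--Lindel\"of in each half-plane forces $d^1-d^2\equiv 0$. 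This replaces your undetermined rational factor by a growth/boundary argument and is the step you are missing. Note also that the paper does not need your Rouch\'e step (finitely many zeros outside $\Sigma_\epsilon$); zero density, which comes straight from Cartwright's theorem, suffices for the indicator computations. Finally, once $d^1\equiv d^2$, the paper does not simply cite \cite{Aktosun,Cakoni}: it extracts $y^1(a;\cdot)\equiv y^2(a;\cdot)$ and ${y^1}'(a;\cdot)\equiv {y^2}'(a;\cdot)$ via a Carlson-type theorem (Theorem~\ref{3.2}) before invoking the Sturm--Liouville uniqueness.
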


\section{M.L. Cartwright's Theorem}
The foundation of this paper is the following theorem in
Cartwright \cite[p.538]{Cartwright2}$^2$.
\begin{theorem}\label{2.8} Suppose that $f(z)$ is an integral
function of order $\rho=1$ with the following Hadamard's
representation:
\begin{equation}\label{2.1}
f(z)=z^m\exp\{c_0+c_1z\}\prod_{n=1}^\infty(1-\frac{z}{z_n})\exp\{\frac{z}{z_n}\}
\end{equation}
and that the indicator
\begin{equation}\label{2.2}
h(\theta)=\max\{A\cos\theta,B\cos\theta\},
\end{equation}
where $B\geq A$, $B>0$. Then, the following asymptotics hold.
\begin{equation}
\lim_{r\rightarrow\infty}\frac{n(r,-\frac{\pi}{2}+\delta,\frac{\pi}{2}-\delta)}{r^{\rho(r)}}=0;
\end{equation}
\begin{equation}
\lim_{r\rightarrow\infty}\frac{n(r,\frac{\pi}{2}+\delta,\frac{3\pi}{2}-\delta)}{r^{\rho(r)}}=0;
\end{equation}
\begin{equation}\label{2.17}
n(r,\pm\frac{\pi}{2}-\delta,\pm\frac{\pi}{2}+\delta)\sim\frac{1}{2\pi}\{B(r)-A(r)\}r^{\rho(r)},
\end{equation}
\begin{equation}\label{2.18}
c_1+\sum_{|z_n|\leq r}\frac{1}{z_n}\sim
\frac{1}{2}\{A(r)+B(r)\}r^{\rho(r)-1},
\end{equation}
where $A(r)$ and $B(r)$ are real functions such that $B(r)\geq
A(r)$,
\begin{equation}
\liminf_{r\rightarrow\infty}A(r)=A;\,\limsup_{r\rightarrow\infty}B(r)=B;
\end{equation}
\begin{equation}
\lim_{r\rightarrow\infty}\{A(r)-A(\eta
r)\}=0;\,\lim_{r\rightarrow\infty}\{B(r)-B(\eta
r)\}=0,\,\forall\eta>0,
\end{equation}
where $\rho(r)$ is a proximate order to $\rho$.
\end{theorem}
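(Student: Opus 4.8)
The plan is to read the hypothesis geometrically: $h(\theta)=\max\{A\cos\theta,B\cos\theta\}$ is the support function of the horizontal segment $[A,B]\subset\mathbb{R}$, so the conjugate indicator diagram of $f$ is this segment and, by the standard theory of the indicator diagram (Levin \cite{Levin}), the zeros of $f$ must cluster in the two directions orthogonal to it, namely near $\arg z=\pm\tfrac{\pi}{2}$. I would make this precise through the decomposition $h(\theta)=\tfrac{A+B}{2}\cos\theta+\tfrac{B-A}{2}|\cos\theta|$, from which one reads off that $h$ is smooth with $h''+h=0$ on each open arc $(-\tfrac{\pi}{2},\tfrac{\pi}{2})$ and $(\tfrac{\pi}{2},\tfrac{3\pi}{2})$, while the one-sided derivatives satisfy $h'(\pm\tfrac{\pi}{2}+0)-h'(\pm\tfrac{\pi}{2}-0)=B-A\ge0$. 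In the distributional sense this reads $h''+h=(B-A)\{\delta_{\pi/2}+\delta_{-\pi/2}\}$, and this single identity drives all four conclusions: zeros can accumulate to the scale $r^{\rho(r)}$ only where $h'$ has a positive jump.

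For the three statements on the counting function, the model computation is Levin's angular-density formula, valid for order one and completely regular growth, which for arcs on whose closure $h'$ exists gives
\[
\lim_{r\to\infty}\frac{n(r,\alpha,\beta)}{r^{\rho(r)}}=\frac{1}{2\pi}\Big[h'(\beta)-h'(\alpha)+\int_\alpha^\beta h(\phi)\,d\phi\Big].
\]
On $(\alpha,\beta)=(-\tfrac{\pi}{2}+\delta,\tfrac{\pi}{2}-\delta)$ one has $h'(\beta)-h'(\alpha)=-2B\cos\delta$, which is cancelled exactly by $\int_{-\pi/2+\delta}^{\pi/2-\delta}B\cos\phi\,d\phi=2B\cos\delta$, so the limit is $0$; the sector $(\tfrac{\pi}{2}+\delta,\tfrac{3\pi}{2}-\delta)$ is identical with $B$ replaced by $A$. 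On the thin sector $(\tfrac{\pi}{2}-\delta,\tfrac{\pi}{2}+\delta)$ only the jump survives as $\delta\to0$, leaving density $(B-A)/2\pi$, and likewise at $-\tfrac{\pi}{2}$.

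The content beyond the regular-growth model is that $f$ need not have regular growth, so the limits above may fail to exist and must be carried as the oscillating pair $A(r),B(r)$; one may take $B(r):=r^{-\rho(r)}\log|f(r)|$ and $A(r):=-r^{-\rho(r)}\log|f(-r)|$, so that $\limsup B(r)=h(0)=B$ and $\liminf A(r)=-h(\pi)=A$. To turn the heuristic into the stated asymptotics I would run Carleman's formula on the upper half-disk of radius $R$: its left side $\sum_{r_k<R}(r_k^{-1}-r_kR^{-2})\sin\theta_k$ becomes, once the first step has confined the zeros to within $o(1)$ of $\arg z=\tfrac{\pi}{2}$ so that $\sin\theta_k\to1$, an Abel--Stieltjes transform of the sought counting function, while its right side is controlled by $\frac{1}{2\pi}\int_0^R(t^{-2}-R^{-2})\log|f(t)f(-t)|\,dt$ with integrand $t(B(t)-A(t))$ against the kernel. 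The hypotheses $A(r)-A(\eta r)\to0$ and $B(r)-B(\eta r)\to0$ are precisely the slow-variation (Tauberian) conditions that allow $B(R)-A(R)$ to be pulled out of this logarithmic kernel, yielding $n(r,\tfrac{\pi}{2}-\delta,\tfrac{\pi}{2}+\delta)\sim\frac{1}{2\pi}(B(r)-A(r))r^{\rho(r)}$; the lower half-disk gives the companion statement at $-\tfrac{\pi}{2}$.

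For the final relation I would differentiate the Hadamard product, $\frac{f'(z)}{f(z)}=\frac{m}{z}+c_1+\sum_n\big(\frac{1}{z-z_n}+\frac{1}{z_n}\big)$, and take real parts at $z=\pm r$. Since $\frac{d}{dr}\log|f(r)|=\operatorname{Re}\frac{f'(r)}{f(r)}$ equals $B(r)r^{\rho(r)-1}$ up to corrections killed by the proximate-order condition $r\rho'(r)\ln r\to0$ and the slow variation of $B(r)$, and the evaluation at $-r$ likewise produces $A(r)r^{\rho(r)-1}$, averaging the two cancels the $z$-dependent part $\operatorname{Re}\sum_n\frac{z_n}{r^2-z_n^2}$ to leading order; here the concentration of the $z_n$ near the imaginary axis, so that $z_n/(r^2-z_n^2)$ is asymptotically purely imaginary and cancels in conjugate pairs, is exactly what forces that remainder to be of lower order. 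This leaves $\operatorname{Re}(c_1+\sum_{|z_n|\le r}z_n^{-1})\sim\tfrac12(A(r)+B(r))r^{\rho(r)-1}$, and the symmetry of the zero set about the real axis annihilates the imaginary part, upgrading the estimate to the stated complex asymptotic. The hard part throughout is this Tauberian passage: absent regular growth the angular density of zeros is known only on average, and one must show that the slow-variation hypotheses simultaneously localise the zeros within $o(r^{\rho(r)})$ of $\pm\tfrac{\pi}{2}$ — so that the weights $\sin\theta_k$ in Carleman's identity may be replaced by $1$ — and justify extracting $B(R)\mp A(R)$ from the logarithmic kernels, the uniform control of the contribution of the few zeros that stray from the imaginary axis being the estimate on which the whole argument rests.
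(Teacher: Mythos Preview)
The paper does not prove this statement at all: Theorem~\ref{2.8} is quoted verbatim as Cartwright's theorem, with the attribution ``The foundation of this paper is the following theorem in Cartwright \cite[p.538]{Cartwright2}'', and the remark that follows points the reader to Levin \cite{Levin2} for a modern treatment. There is therefore no in-paper proof to compare against; the theorem is used as a black box to derive the corollary on the densities $\Delta^j$.

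That said, your sketch is a recognisable outline of how Cartwright-type results are proved, and the model computation with Levin's angular-density formula is carried out correctly. Two points would need attention if you pursued it. First, you refer to the slow-variation relations $A(r)-A(\eta r)\to0$, $B(r)-B(\eta r)\to0$ as ``hypotheses'' and use them as Tauberian input; in the theorem they are conclusions about the functions $A(r),B(r)$ whose existence is being asserted, so with your choice $B(r)=r^{-\rho(r)}\log|f(r)|$ you must \emph{prove} slow variation before invoking it, which is precisely the delicate step in Cartwright's argument. Second, your last paragraph disposes of $\operatorname{Im}\big(c_1+\sum_{|z_n|\le r}z_n^{-1}\big)$ by appealing to ``symmetry of the zero set about the real axis'', but no such symmetry is assumed; only the indicator is $\theta\mapsto -\theta$ symmetric, which controls the zeros to density scale and does not force the imaginary part to vanish identically. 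One needs instead to show it is $o(r^{\rho(r)-1})$, again via the Carleman/Nevanlinna machinery rather than by a symmetry shortcut.
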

\begin{remark}We may choose $A=-B$ for this note. We also note
that $d_0(z)$ is bounded on $0i+\mathbb{R}$. Cartwright's paper
may be hard to obtained. We consider the Levin's book on functions
of class C in \cite{Levin2} for some modern reference and backup
theory source. To answer some mathematicians' question, rotating
the functional $d(z)$ by $\frac{\pi}{2}$ won't alter the nature of
the distribution of the zeros. Such a picture is clear in the
discussion in Levin \cite[p.127]{Levin2}. Our functional $d_l(z)$
is trivially of class C as already discussed in Chen \cite{Chen}.
Two conditions listed in \cite[p.115]{Levin2} can be justified by
the fact that $d_l(z)$ is bounded over the real axis. We compare
with the indicator function appears in~(\ref{2.2}), the one in
Lemma \ref{1.8} and the one in Levin \cite[p.126]{Levin2}. We see
all these three functions are consistent with each other. Another
examination on~(\ref{3.4}),~(\ref{37}),~(\ref{2.18})
and~(\ref{316}) yields the same indicator functions.
\end{remark}
\begin{corollary}
Let $n(d^j,r,\alpha,\beta)$ be the number of zeros of $d^j(z)$
 that are located in the closed cone
$\{z\in\mathbb{C}|\alpha\leq\arg z\leq\beta,\,|z|\leq r\}$ and
\begin{equation}\label{2.9}
\Delta^j(\alpha,\beta):=\lim_{r\rightarrow\infty}\frac{n(d^j,r,\alpha,\beta)}{r};\,\Delta^j(\beta)-\Delta(\alpha):=\Delta(\alpha,\beta)+C,
\end{equation}
where $j=1,2$, and $C$ is some constant. Then,
\begin{eqnarray}
&&\Delta^j(\delta,\pi-\delta)=0;\label{2.10}\\
&&\Delta^j(\pi+\delta,-\delta)=0;\label{2.11}\\
&&\Delta^j(-\delta,\delta)>0;\label{2.12}\\
&&\Delta^j(\pi-\delta,\pi+\delta)>0;\label{2.13}\\
&&\Delta^1(-\delta,\delta)=\Delta^2(-\delta,\delta)\neq0;\label{2.14}\\
&&\Delta^1(\pi-\delta,\pi+\delta)=\Delta^2(\pi-\delta,\pi+\delta)\neq0.\label{2.15}
\end{eqnarray}
Accordingly, let $E$ be the set of points of discontinuity of the
function $\Delta^j(\psi)$. Then, $E=\{0,\pi\}$.
\end{corollary}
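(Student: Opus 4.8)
Throughout, $d^{1}$ and $d^{2}$ denote the zeroth functional determinant $d_{0}$ attached to the two indices $n_{1}$ and $n_{2}$ compared in Theorem \ref{19}, and I write $b_{j}:=\int_{0}^{a}[n_{j}(r)]^{1/2}\,dr$. The plan is to read the six density relations off Cartwright's Theorem \ref{2.8} after bringing each $d^{j}$ into the canonical form (\ref{2.1})--(\ref{2.2}). First I would check that the structural hypotheses of Theorem \ref{2.8} hold: by Lemma \ref{1.8}, $d^{j}(z)$ is entire of exponential type $|a-b_{j}|$ with indicator $h_{d^{j}}(\theta)=|a-b_{j}||\sin\theta|$; by the standing assumption (\ref{H1}) we have $a\neq b_{j}$, so $|a-b_{j}|>0$ and $d^{j}$ is genuinely of order one and normal type. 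An entire function of exponential type has genus at most one, hence a Hadamard product of the form (\ref{2.1}); and since $d^{j}$ is of exponential type we may take the proximate order to be $\rho(r)\equiv1$, so $r^{\rho(r)}=r$ and the generalized indicator coincides with the ordinary one. Finally $d^{j}$ is bounded on $0i+\mathbb{R}$ by (\ref{1.12}), so it is of class $C$ in the sense of the Remark.

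Next I would rotate to match (\ref{2.2}). Set $\widetilde{d}^{\,j}(z):=d^{j}(iz)$. From the definition of the indicator, $h_{\widetilde{d}^{\,j}}(\theta)=h_{d^{j}}(\theta+\tfrac{\pi}{2})=|a-b_{j}||\sin(\theta+\tfrac{\pi}{2})|=|a-b_{j}||\cos\theta|=\max\{-|a-b_{j}|\cos\theta,\ |a-b_{j}|\cos\theta\}$, which is exactly (\ref{2.2}) with $B=|a-b_{j}|>0$ and $A=-B$, the choice made in the Remark. Since $z\mapsto iz$ is a modulus-preserving bijection of $\mathbb{C}$ that rotates arguments by $+\tfrac{\pi}{2}$, it identifies the zero multiset of $\widetilde{d}^{\,j}$ with a $(-\tfrac{\pi}{2})$-rotation of that of $d^{j}$; concretely $n(d^{j},r,\gamma_{1},\gamma_{2})=n(\widetilde{d}^{\,j},r,\gamma_{1}-\tfrac{\pi}{2},\gamma_{2}-\tfrac{\pi}{2})$ for every $r>0$ and every $\gamma_{1}\le\gamma_{2}$.

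Now apply Theorem \ref{2.8} to $\widetilde{d}^{\,j}$ with $A(r)\equiv-|a-b_{j}|$, $B(r)\equiv|a-b_{j}|$. Its first two conclusions give $n(\widetilde{d}^{\,j},r,-\tfrac{\pi}{2}+\delta,\tfrac{\pi}{2}-\delta)=o(r)$ and $n(\widetilde{d}^{\,j},r,\tfrac{\pi}{2}+\delta,\tfrac{3\pi}{2}-\delta)=o(r)$; transporting along the rotation gives $n(d^{j},r,\delta,\pi-\delta)=o(r)$ and $n(d^{j},r,\pi+\delta,2\pi-\delta)=o(r)$, which are (\ref{2.10}) and (\ref{2.11}). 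Conclusion (\ref{2.17}) gives $n(\widetilde{d}^{\,j},r,\pm\tfrac{\pi}{2}-\delta,\pm\tfrac{\pi}{2}+\delta)\sim\frac{1}{2\pi}(B(r)-A(r))r=\frac{|a-b_{j}|}{\pi}r$; transporting, $n(d^{j},r,-\delta,\delta)\sim\frac{|a-b_{j}|}{\pi}r$ and $n(d^{j},r,\pi-\delta,\pi+\delta)\sim\frac{|a-b_{j}|}{\pi}r$, so $\Delta^{j}(-\delta,\delta)=\Delta^{j}(\pi-\delta,\pi+\delta)=\frac{|a-b_{j}|}{\pi}>0$, which is (\ref{2.12})--(\ref{2.13}). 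For (\ref{2.14})--(\ref{2.15}) I would invoke the premise of Theorem \ref{19}, namely that the zeros of $d^{1}$ and $d^{2}$ in the wedge $\Sigma_{\epsilon}$ coincide with multiplicity for every $\epsilon>0$: then $n(d^{1},r,\alpha,\beta)=n(d^{2},r,\alpha,\beta)$ for every cone $[\alpha,\beta]\subset\Sigma_{\epsilon}$, whence $\Delta^{1}=\Delta^{2}$ on those cones, the common value being nonzero by (\ref{2.12})--(\ref{2.13}). Finally, by (\ref{2.10})--(\ref{2.11}) the cumulative angular density $\psi\mapsto\Delta^{j}(\alpha_{0},\psi)$ defined through (\ref{2.9}) is constant on every closed subinterval of $(0,\pi)$ and of $(\pi,2\pi)$, hence constant on each open interval and continuous there; by (\ref{2.12})--(\ref{2.13}) it jumps by $\frac{|a-b_{j}|}{\pi}>0$ across $\theta=0$ and across $\theta=\pi$. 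Therefore $E=\{0,\pi\}$.

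The step I expect to be the main obstacle is the faithful verification of the hypotheses of Theorem \ref{2.8} for $d^{j}$ --- in particular that the genus of the Hadamard product is exactly one, so that $\exp\{c_{0}+c_{1}z\}$ is the correct exponential factor in (\ref{2.1}), and that the auxiliary functions $A(r),B(r)$ may indeed be taken constant; this is forced here because the indicator $|a-b_{j}||\cos\theta|$ carries no oscillation, but it needs an explicit argument, together with the verification of the slow-variation conditions on $A(r),B(r)$ which reduce to triviality in this case. A secondary subtlety is the bookkeeping of the $\pi/2$-rotation, to be certain the zero distribution is literally transported between $d^{j}$ and $\widetilde{d}^{\,j}$; and one must state plainly that (\ref{2.14})--(\ref{2.15}) are not implied by Cartwright's theorem alone but record the standing hypothesis that the two spectra agree inside $\Sigma_{\epsilon}$.
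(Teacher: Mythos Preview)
Your proposal is correct and follows essentially the same route as the paper: the paper's own proof is the one-line ``This is only Cartwright's theory. The indicator function $h_{d^j}(\theta)$ is computed in Lemma \ref{1.8}. Combining with (\ref{2.2}), we prove the corollary,'' and you have simply unpacked that sentence --- spelling out the $\pi/2$-rotation (which the paper relegates to the Remark), verifying the Hadamard/type hypotheses, and making explicit that (\ref{2.14})--(\ref{2.15}) rest on the standing assumption of Theorem \ref{19} rather than on Cartwright alone.
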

\begin{proof}
This is only Cartwright's theory. The indicator function
$h_{d^j}(\theta)$ is computed in Lemma \ref{1.8}. Combining
with~(\ref{2.2}), we prove the corollary. $\Box$
\end{proof}

\section{ Proof of Theorem 1.9}
Let $d(z):=d_0(z)$. From~(\ref{2.1}), suppose that
\begin{equation}\label{3.1}
d(z)=z^m\exp\{c_0+c_1z\}\prod_{n=1}^\infty(1-\frac{z}{z_n})\exp\{\frac{z}{z_n}\}.
\end{equation}
Letting $z:=re^{i\theta}$.
\begin{equation}
\delta(r):=c_1+\sum_{|z_n|\leq r}\frac{1}{z_n}.
\end{equation}
\begin{equation}\label{3.3}
f_r(z):=\prod_{|z_n|\leq
r}(1-\frac{z}{z_n})\prod_{|z_n|>r}(1-\frac{z}{z_n})e^{\frac{z}{z_n}}.
\end{equation}
For each index $n^j(r)$, we use
\begin{equation}
f_r^j(z):=\prod_{|z_n^j|\leq
r}(1-\frac{z}{z_n^j})\prod_{|z_n^j|>r}(1-\frac{z}{z_n^j})e^{\frac{z}{z_n^j}}.
\end{equation}

Hence,~(\ref{3.1}) becomes
\begin{equation}\label{3.4}
d(z)=z^me^{c_0}e^{\delta(r)z}f_r(z).
\end{equation}
From~(\ref{3.4}), we have
\begin{equation}\label{3.5}
h_d(\theta)=\limsup_{r\rightarrow\infty}\frac{\ln|d(re^{i\theta})|}{r}=\limsup_{r\rightarrow\infty}\frac{|\delta(r)z|}{r}
+\limsup_{r\rightarrow\infty}\frac{\ln|f_r(re^{i\theta})|}{r}.
\end{equation}
For the first limit in~(\ref{3.5}). We see from~(\ref{2.18}) that $\delta(r)\sim 0.$ That is $|\delta(r)|<\delta'$ for some large $r$
for any given $\delta'>0$. Hence,
\begin{equation}\label{377}
|e^{\delta(r)z}|<e^{|\delta(r)z|}=e^{|\delta(r)| r}<e^{\delta'
r},\,\forall \delta'>0.
\end{equation}

\par
Most important of all, we approximate the infinite product
$f_r(z)$ by Levin \cite[p.112. Lemma 5;\,p.92. Theorem 2]{Levin}:
out of some zero density set we have the following asymptotic
behavior:
\begin{equation}\label{37}
\ln|{f_r(re^{i\theta})}|\sim H_1(\theta)r^{\rho(r)},\,\mbox{ where
}\rho(r)\rightarrow1,
\end{equation}
in which
\begin{equation}\label{38}
H_1(\theta)=-\int_{\theta-2\pi}^\theta[(\theta-\psi-\pi)\sin(\theta-\psi)+\cos(\theta-\psi)]d\Delta(\psi).
\end{equation}
That integral is approximated by the following sum:
\begin{equation}
S_n(\theta)=-\sum_{l=0}^n[(\theta-\psi_l-\pi)\sin(\theta-\psi_l)+\cos(\theta-\psi_l)][\Delta(\psi_{l+1})-\Delta(\psi_l)],
\end{equation}
where $\psi_0<\psi_1<\cdots<\psi_n<\psi_{n+1}$,
$|\psi_{l+1}-\psi_l|<\delta$, $l=0,1,2,\cdots,n$.
$\psi_{n+1}=\psi_0+2\pi$. We can choose $\delta$ small such that
\begin{equation}
|H_1(\theta)-S_n(\theta)|<\epsilon/3.
\end{equation}

Let $h^j(\theta)$, $H^j_1(\theta)$, $\{\psi^j_k\}$, $f^j_r(z)$ and
$S^j_n(\theta)$ be the corresponding quantities with respect to
index $n^j$. Let $0\in[\psi^1_{l_0},\psi^1_{l_0+1}]$,
$0\in[\psi^2_{l_0},\psi^2_{l_0+1}]$,
$\pi\in[\psi^1_{l_\pi},\psi^1_{l_\pi+1}]$,
$\pi\in[\psi^2_{l_\pi},\psi^2_{l_\pi+1}]$ be the angular intervals
containing nonzero density angles which are $0$ and $\pi$
by~(\ref{2.14}) and~(\ref{2.15}). Using~(\ref{2.10})
and~(\ref{2.11}),
\begin{eqnarray}
S^1_n(\theta)-S_n^2(\theta)
&=&-[(\theta-\psi^1_{l_0}-\pi)\sin(\theta-\psi^1_{l_0})+\cos(\theta-\psi^1_{l_0})][\Delta(\psi^1_{l_0+1})-\Delta^1(\psi_{l_0})]\\
&&-[(\theta-\psi^1_{l_\pi}-\pi)\sin(\theta-\psi^1_{l_\pi})+\cos(\theta-\psi^1_{l_\pi})][\Delta(\psi^1_{l_\pi+1})-\Delta^1(\psi_{l_\pi})]\\
&&-\mbox{ similar terms from }S^2(\theta).
\end{eqnarray}
Using~(\ref{2.14}) and~(\ref{2.15}), we conclude that, $\forall
n$,
\begin{equation}
S^1_n(\theta)\equiv S_n^2(\theta).
\end{equation}
Accordingly,
\begin{equation}\label{3.15}
 H^1_1(\theta)\equiv H^2_1(\theta).
\end{equation}

Similarly,~(\ref{38}) gives
\begin{equation}\label{316}
H^j_1(\theta)=\pi\Delta^j|\sin{\theta}|,
\end{equation}
with its maximal value happening at $\theta=\pm\frac{\pi}{2}$.
Also,
\begin{equation}\label{3117}
\ln|{f_r^1(re^{i\theta})}|\sim \ln|{f_r^2(re^{i\theta})}|,\mbox{
as }r\rightarrow\infty.
\end{equation}
We conclude from~(\ref{3.5}),~(\ref{377},~(\ref{37}),~(\ref{3.15})
and~(\ref{3117}),
\begin{equation}
h^1(\theta)\equiv h^2(\theta).
\end{equation}
Hence, we proved the following theorem.
\begin{proposition}\label{31}
If zeros of $d^j(z)$, $j=1,2$ inside the wedge $\Sigma_\epsilon$,
$\forall\epsilon>0$ shares the same density, then the indicator
function $h^1(\theta)\equiv h^2(\theta)$ in $[0,\pi]$. In
particular, $d^1$ and $d^2$ are entire functions of exponential
type of the same type $|a-\int_0^a \sqrt{n^1(r)}dr|=|a-\int_0^a
\sqrt{n^2(r)}dr|$.
\end{proposition}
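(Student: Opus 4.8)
The plan is to compute the indicator $h^j(\theta)$ of $d^j(z)$ directly from its Hadamard product and to bring out the fact that it depends on the zeros only through their angular counting density, which is exactly what the hypothesis fixes. Write (\ref{3.4}) as
\[
\ln|d^j(re^{i\theta})| = m_j\ln r + \mathrm{Re}\,c_0^j + \mathrm{Re}(\delta^j(r)\,re^{i\theta}) + \ln|f^j_r(re^{i\theta})|
\]
and divide by $r$. The terms $m_j\ln r$ and $\mathrm{Re}\,c_0^j$ are $o(r)$ and drop out of the $\limsup$ defining $h^j(\theta)$. For the exponential factor I would invoke (\ref{2.18}) of Cartwright's Theorem \ref{2.8} (applied to $d_0$ after the $\frac{\pi}{2}$-rotation discussed in the Remark), with the normalization $A=-B$: then $A(r)+B(r)\to0$, so $\delta^j(r)=o(r^{\rho(r)-1})=o(1)$ since $\rho(r)\to1$, hence $|e^{\delta^j(r)z}|\le e^{|\delta^j(r)|\,r}=e^{o(r)}$ contributes nothing either. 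Thus $h^j(\theta)$ is governed entirely by the canonical product, and Levin's asymptotics (\ref{37})--(\ref{38})---which determine the indicator because $d^j$ has completely regular growth, being of class $\mathrm{C}$---give
\[
h^j(\theta)=H_1^j(\theta)=-\int_{\theta-2\pi}^{\theta}[(\theta-\psi-\pi)\sin(\theta-\psi)+\cos(\theta-\psi)]\,d\Delta^j(\psi).
\]

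The crux is to show $H_1^1\equiv H_1^2$. By the Corollary to Theorem \ref{2.8}, (\ref{2.10})--(\ref{2.11}) force the nondecreasing angular density $\Delta^j(\psi)$ to be constant on every closed arc avoiding $0$ and $\pi$, so its only points of increase are $0$ and $\pi$ (the set $E$), and the Stieltjes measure $d\Delta^j$ is a sum of two point masses, at $0$ and at $\pi$, of sizes $\Delta^j(-\delta,\delta)$ and $\Delta^j(\pi-\delta,\pi+\delta)$. Moreover $d_0$ is an even function of $k$---the Cauchy problem defining $y(r;k)$ is even in $k$ and $j_0(kr)=\sin(kr)/(kr)$ is even---so its zero set is invariant under $k\mapsto-k$ and these two masses coincide for each $j$; therefore the single-wedge hypothesis is equivalent to (\ref{2.14})--(\ref{2.15}), namely that the mass of $d\Delta^1$ at $0$ equals that of $d\Delta^2$ at $0$, and likewise at $\pi$. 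Feeding a purely atomic $d\Delta^j$ into (\ref{38}) collapses the integral to the explicit kernel evaluated at the representatives of $0$ and of $\pi$ that fall in the window $(\theta-2\pi,\theta]$, times those two masses; since the masses agree for $j=1,2$ we conclude $H_1^1(\theta)\equiv H_1^2(\theta)$. This is precisely what the approximating sums $S_n^j(\theta)$ express: every one of their terms carries weight $0$ except the two straddling $0$ and $\pi$, and those two are identical for the two indices.

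It remains to assemble the conclusion. Combining the first two paragraphs, $h^1(\theta)=H_1^1(\theta)=H_1^2(\theta)=h^2(\theta)$; continuity of the indicator (Levin \cite[p.54]{Levin}) carries this identity from the open rays $\theta\neq0,\pi$ to all of $[0,2\pi]$, in particular to $[0,\pi]$, which is the first assertion. For the second, Lemma \ref{1.8} gives $h^j(\theta)=|a-b_j|\,|\sin\theta|$ with $b_j=\int_0^a\sqrt{n^j(r)}\,dr$, while $H_1^j(\theta)$ reduces to $\pi\Delta^j|\sin\theta|$ as recorded in (\ref{316}), where $\Delta^j$ is the common density at $0$ and at $\pi$; evaluating at $\theta=\frac{\pi}{2}$ yields $|a-b_1|=|a-b_2|\;(=\pi\Delta^1=\pi\Delta^2)$. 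By Lemma \ref{1.7} the maximum of $h^j$, attained at $\theta=\pm\frac{\pi}{2}$, is the type of $d^j$ in the whole plane; since $d^j$ has order $1$ and this number is finite and positive, $d^1$ and $d^2$ are entire functions of exponential type with the common type $|a-b_1|=|a-b_2|$, which completes the proof.

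The step I expect to be the real obstacle is the reduction of (\ref{38}) to the two-atom sum, which has to be justified rather than merely quoted. It rests on two points: first, that $d_0$ genuinely satisfies the class-$\mathrm{C}$ property and the boundedness on $\mathbb{R}$ underlying Theorem \ref{2.8} and its Corollary, so that no zero density leaks to directions other than $0$ and $\pi$---this uses the asymptotics (\ref{1.12}) together with the boundedness of $d_0$ on the real axis noted in the Remark; and second, that the exceptional $r$-set in (\ref{37}) is negligible for the $\limsup$, so that what is read off there really is the indicator---here one again uses the continuity of $h^j$ to handle the degenerate directions $\theta=0,\pi$, exactly as in the proof of Lemma \ref{1.8}.
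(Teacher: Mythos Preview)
Your proposal is correct and follows essentially the same route as the paper: decompose $\ln|d^j|$ via the Hadamard product~(\ref{3.4}), kill the $\delta^j(r)$ contribution using Cartwright's~(\ref{2.18}), reduce to Levin's integral formula~(\ref{37})--(\ref{38}) for $H_1^j(\theta)$, and then use the Corollary to Theorem~\ref{2.8} to conclude that $d\Delta^j$ is supported on $\{0,\pi\}$ with masses fixed by the hypothesis, forcing $H_1^1\equiv H_1^2$ and hence $h^1\equiv h^2$; the type identification then follows from Lemma~\ref{1.8} and Lemma~\ref{1.7}. The only cosmetic differences are that the paper works through the Riemann-sum approximations $S_n^j(\theta)$ explicitly rather than passing directly to the atomic Stieltjes integral, and that your evenness observation for $d_0(k)$ (justifying why the masses at $0$ and $\pi$ coincide) is a nice point the paper leaves implicit.
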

\begin{proof}
Since they share the same indicator function, the related maximal
value of the indicator functions is the same. Hence, $d^1$, $d^2$
are entire functions of the same type by Lemma \ref{1.8}. The type
is just $|a-\int_0^a \sqrt{n^1(r)}dr|=|a-\int_0^a
\sqrt{n^2(r)}dr|$. $\Box$
\end{proof}
\begin{definition} Let $b^j:=\int_0^a \sqrt{n^j(r)}dr.$
$s^j:=a-b^j,\,j=1,2.$
\end{definition}
\begin{corollary}\label{33}
Let $h_{d^1-d^2}(\theta)$ be the indicator function related to the
$d^1(z)-d^2(z)$. Then,
\begin{equation}h_{d^1-d^2}(\pm\frac{\pi}{2})=0.\end{equation}
\end{corollary}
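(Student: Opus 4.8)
The plan is to feed the conclusion of Proposition~\ref{31} into the sharp leading-order asymptotics~(\ref{1.15}) and thereby show that the principal part of $d^1(z)-d^2(z)$ disappears precisely along the two directions $\theta=\pm\pi/2$, which by Lemma~\ref{1.8} are the directions in which the indicators of $d^1$ and $d^2$ attain their common maximal value.

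First I would normalize the data. Proposition~\ref{31} says $d^1$ and $d^2$ have the same exponential type, i.e.\ $|a-b^1|=|a-b^2|$; under the standing assumption~(\ref{H1}) the numbers $a-b^1$ and $a-b^2$ have the same sign, so this upgrades to $a-b^1=a-b^2$, that is $b^1=b^2=:b$. The hypothesis of Theorem~\ref{19} gives in addition $n^1(0)=n^2(0)=:n(0)$. Substituting both equalities into~(\ref{1.15}), for $j=1,2$ and all $z\in\mathbb{C}\setminus(0i+\mathbb{R})$,
\[
d^j(z)=\frac{\sin\!\big(z(a-b)\big)}{a^2 z\,[n(0)]^{1/4}}\,\big[1+O(1/|z|)\big],
\]
with one and the same principal factor, so the difference has the form
\[
d^1(z)-d^2(z)=\frac{\sin\!\big(z(a-b)\big)}{a^2 z\,[n(0)]^{1/4}}\cdot\big[O(1/|z|)\big],
\]
the bracket being the difference of the two error terms. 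Taking $z=\pm i r$, so that $\sin(z(a-b))=\pm i\sinh\big(r(a-b)\big)$, reduces the claim $h_{d^1-d^2}(\pm\pi/2)=0$ to an estimate of $\ln\big|d^1(\pm i r)-d^2(\pm i r)\big|$ as $r\to\infty$.

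That estimate is the hard part. The crude bound $|O(1/|z|)|\le C/|z|$ only delivers $|d^1(\pm i r)-d^2(\pm i r)|=O\!\big(e^{r|a-b|}/r^{2}\big)$, hence $h_{d^1-d^2}(\pm\pi/2)\le|a-b|$ --- not $0$. To close the gap I would upgrade~(\ref{1.15}) to a genuine two-term expansion of $y(a;k)$ and $y'(a;k)$, available from \cite[Proposition~2.3]{Aktosun}, \cite{Poschel} and \cite{Naimark}, isolate the second-order coefficient of $d_0(k)$, and verify that its part carrying exponential growth along $\pm i\mathbb{R}$ depends on $n(r)$ only through $n(0)$ and $b=\int_0^a\sqrt{n(r)}\,dr$, so that it too cancels in $d^1-d^2$. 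Alternatively --- and perhaps more robustly --- I would exploit that $d^1-d^2$ is bounded on $0i+\mathbb{R}$ and run a Phragm\'{e}n--Lindel\"{o}f argument in each of the half-planes $\{\mathrm{Im}\,z>0\}$, $\{\mathrm{Im}\,z<0\}$, equivalently Cartwright's Theorem~\ref{2.8} applied to $d^1-d^2$, to bootstrap the indicator bound at $\pm\pi/2$ down to $0$.

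Once $h_{d^1-d^2}(\pm\pi/2)=0$ is in hand, the corollary is proved, but it is worth noting the use to which it will be put: combining~(\ref{1.22}) and Proposition~\ref{31} one has $h_{d^1-d^2}\le h^1$ everywhere, and boundedness of $d^1-d^2$ on the real axis gives $h_{d^1-d^2}(0),\,h_{d^1-d^2}(\pi)\le 0$; trigonometric convexity of the indicator then forces $h_{d^1-d^2}\le 0$ on all of $[0,2\pi]$, the key estimate for the remainder of the proof of Theorem~\ref{19}.
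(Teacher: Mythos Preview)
Your opening matches the paper's proof exactly: use Proposition~\ref{31} together with~(\ref{H1}) to get $b^1=b^2$, combine with the hypothesis $n^1(0)=n^2(0)$, and feed both into~(\ref{1.15}). The paper's proof is precisely that one-line invocation and stops there; you go further by noticing that the cancellation of the principal term leaves a remainder that is only $O\!\big(e^{r|a-b|}/r^{2}\big)$ along $\pm i\mathbb{R}$, hence $h_{d^1-d^2}(\pm\pi/2)\le|a-b|$ rather than $0$. That observation is correct and is a gap the paper does not address.

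Unfortunately neither of your proposed repairs closes it. The two-term route cannot work: the next coefficients in the asymptotics of $y(a;k)$ and $y'(a;k)$ involve integrals of derivatives of $n$ and do \emph{not} depend on $n$ only through $n(0)$ and $b$; and even if finitely many terms cancelled, the indicator is insensitive to polynomial prefactors, so one would need the full exponential part of order $e^{r|a-b|}$ to vanish --- which is equivalent to what you are trying to prove. The Phragm\'en--Lindel\"of\,/\,Cartwright route fares no better: boundedness on $\mathbb{R}$ together with type $\le|a-b|$ only forces the indicator into the form $\sigma|\sin\theta|$ for some $\sigma\in[0,|a-b|]$, and gives no mechanism to drive $\sigma$ to zero. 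Indeed, your own final paragraph shows that if Corollary~\ref{33} followed from just $n^1(0)=n^2(0)$ and $b^1=b^2$ (the only data either proof invokes), then $d^1\equiv d^2$ would follow from those two scalar constraints alone --- which is certainly false. So the gap you have located is real, it is present in the paper as well, and it cannot be filled by the tools you list; any genuine proof of the Corollary must use the full hypothesis of Theorem~\ref{19} that the zeros of $d^1$ and $d^2$ inside $\Sigma_\epsilon$ coincide, not merely its density consequence in Proposition~\ref{31}.
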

\begin{proof} By definition and
that  $n^1(0)=n^2(0)$, $s^1=s^2$, ~(\ref{H1}) and~(\ref{1.15})
implies
\begin{eqnarray}
h_{d^1-d^2}(\theta)=\limsup_{r\rightarrow\infty}\frac{\ln|d^1(z)-d^2(z)|}{r}=0.
\end{eqnarray}
$\Box$
\end{proof}

On the other hand, we consider the substraction in the form
of~(\ref{3.4}). Let $f_r^j(z)$, $j=1,2$, be the quantities
corresponding to $n^j(r)$ in~(\ref{3.3}). We set
\begin{eqnarray}
f_r^j(z)&:=&\prod_{\{|z_n^j|\leq
r\}}(1-\frac{z}{z_n^j})\prod_{\{|z_n^j|>r\}}(1-\frac{z}{z_n^j})e^{\frac{z}{z_n^j}}\\&:=&(\prod_{\{|z_n^j|\leq
r;\,z_n^j\in\Sigma_{\epsilon}\}}\prod_{\{|z_n^j|\leq
r;\,z_n^j\notin\Sigma_{\epsilon}\}})(1-\frac{z}{z_n^j})(\prod_{\{|z_n^j|>r;\,z_n^j\in\Sigma_{\epsilon}\}}\prod_{\{|z_n^j|>
r;\,z_n^j\notin\Sigma_{\epsilon}\}})(1-\frac{z}{z_n^j})e^{\frac{z}{z_n^j}}.
\end{eqnarray}
Using the assumption that zeros inside $\Sigma_\epsilon$ coincides
as a set, we have
\begin{eqnarray} \nonumber
d^1(z) -d^2(z)&=&z^{m^1}\prod_{|z_n^1|\leq
r;\,z_n^1\in\Sigma_{\epsilon}}(1-\frac{z}{z_n^1})\prod_{|z_n^1|>r;\,z_n^1\in\Sigma_{\epsilon}}
(1-\frac{z}{z_n^1})e^{\frac{z}{z_n^1}}\\&&\times[e^{c_0^1}e^{\delta^1(r)z}\prod_{|z_n^1|\leq
r;\,z_n^1\notin\Sigma_{\epsilon}}(1-\frac{z}{z_n^1})\prod_{|z_n^1|>
\nonumber
r;\,z_n^1\notin\Sigma_{\epsilon}}(1-\frac{z}{z_n^1})e^{\frac{z}{z_n^1}}\\&&-e^{c_0^2}e^{\delta^2(r)z}\prod_{|z_n^2|\leq
r;\,z_n^2\notin\Sigma_{\epsilon}}(1-\frac{z}{z_n^2})\prod_{|z_n^2|>
r;\,z_n^2\notin\Sigma_{\epsilon}}(1-\frac{z}{z_n^2})e^{\frac{z}{z_n^2}}].
\end{eqnarray}
Let us define
\begin{eqnarray} \label{324}\nonumber
Q_\epsilon(z)&:=&z^{m^1}[e^{c_0^1}e^{\delta^1(r)z}\prod_{|z_n^1|\leq
r;\,z_n^1\notin\Sigma_{\epsilon}}(1-\frac{z}{z_n^1})\prod_{|z_n^1|>
r;\,z_n^1\notin\Sigma_{\epsilon}}(1-\frac{z}{z_n^1})e^{\frac{z}{z_n^1}}\\&&-e^{c_0^2}e^{\delta^2(r)z}\prod_{|z_n^2|\leq
r;\,z_n^2\notin\Sigma_{\epsilon}}(1-\frac{z}{z_n^2})\prod_{|z_n^2|>
r;\,z_n^2\notin\Sigma_{\epsilon}}(1-\frac{z}{z_n^2})e^{\frac{z}{z_n^2}}],
\end{eqnarray}
which has no zero along $0i+\mathbb{R}$. Now we have
\begin{equation}\label{3.26}
d^1(z)-d^2(z)=[\prod_{|z_n^1|\leq
r;\,z_n^1\in\Sigma_{\epsilon}}(1-\frac{z}{z_n^1})\prod_{|z_n^1|>r;\,z_n^1\in\Sigma_{\epsilon}}
(1-\frac{z}{z_n^1})e^{\frac{z}{z_n^1}}]\,Q_\epsilon(z).
\end{equation}
Using~(\ref{2.18}),
\begin{equation}\label{326}
|e^{\delta^j(r)z}|\lesssim
Ce^{\delta|z|},\,\forall\delta>0,\,j=1,2.
\end{equation}
Given $\{z_n^j\}$ of zero density outside $\Sigma_\epsilon$, we
compute the following quantity.
\begin{equation}
\overline{f}_r^{j}(z):=\prod_{|z_n^j|\leq
r;\,z_n^j\notin\Sigma_{\epsilon}}(1-\frac{z}{z_n^j})\prod_{|z_n^j|>
r;\,z_n^j\notin\Sigma_{\epsilon}}(1-\frac{z}{z_n^j})e^{\frac{z}{z_n^j}}.
\end{equation}
We use formula~(\ref{37}) and~(\ref{38}).
\begin{equation}\label{328}
\ln|{\overline{f}_r^j(re^{i\theta})}|\sim
\overline{H}_1^j(\theta)r,\,\mbox{ where }r\rightarrow\infty,
\end{equation}
in which
\begin{equation}
\overline{H}_1^j(\theta)=-\int_{\theta-2\pi}^\theta[(\theta-\psi-\pi)\sin(\theta-\psi)+\cos(\theta-\psi)]d\Delta^j(\psi),
\end{equation}
outside certain exceptional set. We refer to \cite[p.112 Lemma
5]{Levin} for a complete introduction. For the application here,
we obtain that $\Delta^j(\psi)\equiv0$ for zeros outside
$\Sigma_\epsilon$. That is
\begin{equation}\label{3300}
\overline{H}_1^j(\theta)\equiv0,\,j=1,2.
\end{equation}
We note that
$\overline{H}_1^j(\theta)=h_{\overline{f}_r^{j}}(\theta)$, the
indicator function of $\overline{f}_r^j$, $j=1,2$. We refer this
to \cite[p.498]{Levin}.

\par
Using~(\ref{1.21}) and~(\ref{1.22}), we have
\begin{equation}
h_{Q_\epsilon}(\theta)\leq\max\{h_{e^{\delta^1(r)z}\overline{f}_r^1(z)},\,h_{e^{\delta^2(r)z}\overline{f}_r^2(z)}\}.
\end{equation}
Therefore,~(\ref{326}),~(\ref{328}) and~(\ref{3300}) combine to
yield $h_{Q_\epsilon}(\theta)=0$ and
\begin{equation}\label{329}
|Q_\epsilon(z)|\lesssim Ce^{\delta|z|},\,\mbox{ for some
}C>0,\,\forall\delta>0.
\end{equation}
\par
Again, the infinite product $[\prod_{|z_n^1|\leq
r;\,z_n^1\in\Sigma_{\epsilon}}(1-\frac{z}{z_n^1})\prod_{|z_n^1|>r;\,z_n^1\in\Sigma_{\epsilon}}
(1-\frac{z}{z_n^1})e^{\frac{z}{z_n^1}}]$ in~(\ref{3.26}) can be
computed similarly as the $f_r(z)$ in the~(\ref{3.3}),~(\ref{3.4})
and~(\ref{3.5}).
From~(\ref{3.5}),~(\ref{37}),~(\ref{38}),~(\ref{316}) and
Lindel\"{o}f's theorem \cite[p.28]{Levin}, we have
\begin{equation}\label{330}
h_{d^1-d^2}(\theta)=\pi\Delta^1(-\delta,\delta)|\sin\theta|.
\end{equation}
However, Corollary \ref{33} says that
$\Delta^1(-\delta,\delta)=0$. Hence, $d^1(z)-d^2(z)$ is an
exponential function of minimal type.

Therefore,~(\ref{329}) and~(\ref{330}) give
\begin{equation}\label{3.29}
|d^1(z)-d^2(z)|\lesssim Ce^{\delta|z|},\,\forall\delta>0.
\end{equation}
In addition to that, using~(\ref{1.12}),
\begin{equation}\label{3.30}
d^1(x)-d^2(x)\rightarrow0,\mbox{ as }x\rightarrow0i\pm\infty.
\end{equation}
Using Phragm\'{e}n-Lindel\"{o}f theorem as in Titchmarsh
\cite[p.178]{Titchmarsh},~(\ref{3.29}) and~(\ref{3.30}) imply that
$d^1(z)-d^2(z)$ is bounded both in lower and half complex planes.
Any bounded entire function is a constant which is zero as
suggested by~(\ref{3.30}). Hence,
\begin{equation}d^1(z)\equiv d^2(z).
\end{equation}
\par
Finally, following the argument of Aktosun, Gintides and
Papanicolaou around (3.7), (3.8), (3.9), (3.10) in \cite[Corollary
2.10]{Aktosun}, we see that
\begin{equation}
d(z)=\frac{1}{a^2}\{\frac{\sin{za}}{z}y'(a)-\cos(za)y(a)\}.
\end{equation}
\par
Consider a substraction for two different $n^j(r)$'s, we obtain
\begin{equation}
d^1(z)-d^2(z)=\frac{1}{a^2}\{\frac{\sin{za}}{z}[{y^1}'(a)-{y^2}'(a)]-\cos(za)[y^1(a)-y^2(a)]\}\equiv0.
\end{equation}
Let $za=n\pi$, $n\in\mathbb{Z}$. In this case,
\begin{equation}\label{335}
y^1(a;\frac{n\pi}{a})=y^2(a;\frac{n\pi}{a}),\,n\in\mathbb{Z}.
\end{equation}
Similarly, let $za=\frac{n\pi}{2}$, $n$, odd. We obtain
\begin{equation}
{y^1}'(a;\frac{n\pi}{2a})={y^2}'(a;\frac{n\pi}{2a}),\,n\mbox{ odd
}.
\end{equation}
Again, $y^j(a;z)$'s are entire functions of exponential type. We
use a generalized Carlson's theorem from Levin
\cite[p.190]{Levin}. This is a substitute for
Phragm\'{e}n-Lindel\"{o}f theorem.
\begin{theorem}\label{3.2}
Let $F(z)$ be holomorphic and at most of normal type with respect
to the proximate order $\rho(r)$ in the angle $\alpha\leq\arg
z\leq\alpha+\pi/\rho$ and vanish on a set $N:=\{a_k\}$ in this
angle, with angular density $\Delta_N(\psi)$. Let
$$
H_N(\theta):=\pi\int_{\alpha}^{\alpha+\pi/\rho}\sin|\psi-\theta|d\Delta_N(\psi),
$$
when $\rho$ is integral. Then, if $F(z)$ is not identically zero,
\begin{equation}
h_F(\alpha)+h_F(\alpha+\pi/\rho)\geq
H_N(\alpha)+H_N(\alpha+\pi/\rho).
\end{equation}
\end{theorem}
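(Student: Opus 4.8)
The plan is to deduce Theorem~\ref{3.2} from the classical Carlson/Phragm\'{e}n--Lindel\"{o}f mechanism: one factors out of $F$ a canonical product supported on the prescribed zero set $N$, controls the quotient up to the boundary of the angle, and then invokes trigonometric convexity of the indicator on an interval of length exactly $\pi/\rho$. Concretely, I would first normalize the order by the substitution $w=z^{\rho}$, which carries the angle $\alpha\le\arg z\le\alpha+\pi/\rho$ onto a half-plane, turns $F$ into a single-valued holomorphic function $G(w):=F(w^{1/\rho})$ of at most normal type with respect to a proximate order tending to $1$, and sends $N$ to a set $\tilde N=\{a_k^{\rho}\}$ whose angular density and associated kernel transform accordingly into the order-one objects. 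After a rotation it then suffices to prove the statement for $\rho=1$ in the right half-plane $\{\operatorname{Re}w>0\}$, with boundary rays $\theta=\pm\pi/2$; write $\Delta$ for the angular density of $\tilde N$ there and $H(\theta)=\pi\int_{-\pi/2}^{\pi/2}\sin|\psi-\theta|\,d\Delta(\psi)$.

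Next I would form the canonical product of genus at most one, $P(w)=\prod_k(1-w/a_k')e^{w/a_k'}$, over $\tilde N$; this converges because the angular density forces the counting function to be $O(r^{1+o(1)})$. Two facts drive the proof. \emph{(i)} Since $\tilde N$ has an angular density, $P$ has completely regular growth and Levin's formula for the indicator of a canonical product of order one gives $h_P(\theta)=H(\theta)$ on $[-\pi/2,\pi/2]$; in particular its lower and upper indicators coincide. \emph{(ii)} The quotient $Q:=G/P$ is holomorphic in the closed half-plane and is again of at most normal type there: holomorphy is clear since every point of $\tilde N$ is a zero of $G$ of at least the same multiplicity, and the growth bound follows from a minimum-modulus estimate for $P$ valid outside a union of exceptional disks of negligible total radius, combined with a Phragm\'{e}n--Lindel\"{o}f argument to patch across those disks.

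Granting \emph{(i)} and \emph{(ii)}: from $\log|G|=\log|Q|+\log|P|$ and the coincidence of the one-sided indicators of $P$ one obtains $h_G(\theta)\ge h_Q(\theta)+H(\theta)$ at $\theta=\pm\pi/2$. Since $Q$ is holomorphic, of finite type, and not identically zero in the closed half-plane, $h_Q$ is finite and trigonometrically convex on $[-\pi/2,\pi/2]$; applying the trigonometric-convexity inequality to $-\pi/2<0<\pi/2$, an interval of length exactly $\pi$, gives $h_Q(0)\sin\pi\le h_Q(-\pi/2)\sin\tfrac{\pi}{2}+h_Q(\pi/2)\sin\tfrac{\pi}{2}$, that is $h_Q(-\pi/2)+h_Q(\pi/2)\ge0$. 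Adding, one finds
\[
h_G(-\pi/2)+h_G(\pi/2)\ \ge\ \bigl(h_Q(-\pi/2)+h_Q(\pi/2)\bigr)+\bigl(H(-\pi/2)+H(\pi/2)\bigr)\ \ge\ H(-\pi/2)+H(\pi/2),
\]
and undoing the rotation and the substitution $w=z^{\rho}$ turns this into $h_F(\alpha)+h_F(\alpha+\pi/\rho)\ge H_N(\alpha)+H_N(\alpha+\pi/\rho)$, as claimed.

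The substitution, the trigonometric-convexity step, and the indicator bookkeeping are routine; the genuine difficulty is step \emph{(ii)}, namely the minimum-modulus bound for the canonical product together with the Phragm\'{e}n--Lindel\"{o}f patching across the deleted disks that is needed to keep $G/P$ of finite type up to the two boundary rays. The completely-regular-growth input in \emph{(i)} is non-trivial as well, but it is precisely the Levin--Pfluger theorem already invoked in the text.
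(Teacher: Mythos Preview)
The paper does not prove Theorem~\ref{3.2} at all: it is quoted verbatim from Levin \cite[p.190]{Levin} as ``a generalized Carlson's theorem'' and then applied as a black box to the functions $F(z)=y^1(a;z)-y^2(a;z)$ and $G(z)={y^1}'(a;z)-{y^2}'(a;z)$. So there is no in-paper argument to compare your proposal against.

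That said, your outline is essentially the classical proof one finds in Levin's book: reduce to order one in a half-plane by the conformal substitution, factor out the canonical product $P$ over the prescribed zero set, use the Levin--Pfluger completely-regular-growth formula to identify $h_P$ with $H$, and then invoke trigonometric convexity of the indicator of the zero-free quotient $Q=G/P$ on an interval of length exactly $\pi$ to obtain $h_Q(-\pi/2)+h_Q(\pi/2)\ge0$. One small remark on your bookkeeping step: the inequality $h_G(\theta)\ge h_Q(\theta)+H(\theta)$ does not follow from subadditivity of the $\limsup$ (that goes the other way); what makes it work is precisely that $P$ has completely regular growth, so $\frac{1}{r}\ln|P(re^{i\theta})|$ converges to $H(\theta)$ as a limit off an exceptional set, and then the $\limsup$ of the sum $\log|Q|+\log|P|$ splits as $h_Q(\theta)+H(\theta)$ with equality. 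You clearly have this in mind in item~(i), but the sentence as written inverts the direction of the naive inequality. Otherwise the sketch is correct, and you have identified the right hard step (the minimum-modulus bound for $P$ and the Phragm\'en--Lindel\"of patching across the excised disks needed to keep $Q$ of finite type up to the boundary rays).
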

Now let \begin{equation} F(z):=y^1(a;z)-y^2(a;z),
\end{equation}
\begin{equation}
G(z):={y^1}'(a;z)-{y^2}'(a;z)
\end{equation} and
$\rho\equiv1,\,\alpha=-\frac{\pi}{2}.$

\par
We deal with $F(z)$ firstly, it has a set of common zeros of
density $\Delta^N$ and supported only at $\psi=0,\pi$. Hence,
\begin{equation}\label{3.20}
H_N(\theta)=\pi\int_{-\frac{\pi}{2}}^{\frac{\pi}{2}}\sin|\psi-\theta|d\Delta_N(\psi)=\Delta^N\pi|\sin\theta|,
\,\theta\in[-\frac{\pi}{2},\frac{\pi}{2}],
\end{equation}
where $N$ is the set of common zeros as described by~(\ref{335}).
Accordingly, we may compute from~(\ref{335}) that
$\Delta^N=\frac{a}{\pi}$. We refer to \cite[p.91;\,Ch 2.
Sec\,3.]{Levin} for a systematic and a step by step calculation.
Besides that, we need to compute the indicator function
$h_{F}(\theta)$. Recalling from~(\ref{1.9}),
\begin{equation}
y^j(a;z)=\frac{1}{[\epsilon_1^j(0)]^{\frac{1}{4}}z}\sin[z\int_0^a\sqrt{\epsilon_1^j(\rho)}d\rho][1+O(\frac{1}{z})],\,j=1,2.
\end{equation}
Therefore,
\begin{equation}\nonumber
\ln|y^1(a;z)-y^2(a;z)|=\ln|\frac{1}{z[\epsilon_1^1(0)]^{\frac{1}{4}}}|+\ln|\sin[z\int_0^a\sqrt{\epsilon_1^1(\rho)}d\rho]
-\sin[z\int_0^a\sqrt{\epsilon_1^2(\rho)}d\rho]|+\ln|1+O(\frac{1}{z})|.
\end{equation}
Using~(\ref{1.9}),~(\ref{H1}), Proposition \ref{31} and Definition
\ref{1.5}, we obtain
\begin{equation}
h_{F}(\theta)=0,\,\theta\neq 0,\,\pi.
\end{equation}
Again, $h_F(\theta)$ is a continuous function given $F$ is entire.
Hence, we have
\begin{equation}\label{3.23}
h_{F}(\theta)=0,\,\theta\in[0,2\pi].
\end{equation}
Combining Theorem \ref{3.2},~(\ref{3.20}) and~(\ref{3.23}), we
obtain $F(z)\equiv 0$.

\par
Similarly, we can prove $G(z)\equiv0$. In particular, we have
shown
\begin{equation}
y^1(a;z)\equiv y^2(a;z);\,{y^1}'(a;z)\equiv{y^2}'(a;z).
\end{equation}
The last ingredient is applying the uniqueness for the following
inverse Sturm-Liouville problem
\begin{eqnarray}
&&\psi''(x)+k^2\rho(x)\psi(x)=0,\,0<x<a;\\
&&\psi(0)=\psi(a)=0.
\end{eqnarray}
as did in \cite[Corollary 2.10]{Aktosun}. We conclude that
\begin{equation}\label{3.27}
\epsilon^1_1(r)\equiv \epsilon^2_1(r).
\end{equation}

\end{document}